\newtheorem{theorem}{Theorem}[section]
\newtheorem{lemma}[theorem]{Lemma}
\theoremstyle{definition}
\newtheorem{definition}[theorem]{Definition}
\newtheorem{question}[theorem]{Question}
\newcommand{\U}{\mathcal U}
\newcommand{\w}{\omega}
\newcommand{\IP}{\mathbb P}
\newcommand{\A}{\mathcal{A}}
\newcommand{\F}{\mathcal{F}}
\newcommand{\V}{\mathcal{V}}
\newcommand{\bigvid}{\hat{\ \ }}
\newcommand{\uhr}{\upharpoonright}
\newcommand{\name}[1]{\dot{#1}}
\newcommand{\la}{\langle}
\newcommand{\ra}{\rangle}
\newcommand{\Split}{\mathrm{Split}}
\newcommand{\forces}{\Vdash}
\newcommand{\hot}{\mathfrak}
\newcommand{\nothing}[1]{}
\title[$M$-separable spaces in the Miller model]{$M$-separable spaces of functions are productive in the Miller model}
\author{Du\v{s}an Repov\v{s}  and Lyubomyr Zdomskyy}
\address{Faculty of Education, and Faculty of Mathematics and Physics,
University of Ljubljana, \& Institute of Mathematics, Physics and Mechanics, 1000 Ljubljana, Slovenia.}
\email{dusan.repovs@guest.arnes.si}
\urladdr{http://www.fmf.uni-lj.si/\~{}repovs/index.htm}
\address{Institut f\"ur Diskrete Mathematik und Geometrie, Technische Universit\"at Wien, Wiedner Hauptstra\ss e 8-10/104, 1040 Wien, Austria.}
\email{lzdomsky@gmail.com}
\urladdr{http://dmg.tuwien.ac.at/zdomskyy/}
\subjclass[2010]{Primary: 03E35, 54D20. Secondary: 54C50, 03E05.}
\keywords{ $M$-separable,
 Miller forcing, Menger space, spaces of functions.}
\thanks{The first author was partially supported by
the Slovenian Research Agency grants P1-0292 and N1-0083.
The second author would
like to thank  the Austrian Science Fund FWF (Grants I 2374-N35 and I 3709-N35)
 for generous support for this research.\\
 We are also grateful to the anonymous referee for careful reading of the manuscript.}
\begin{document}

\begin{abstract}
We prove that in the Miller model, every $M$-separable space of the form $C_p(X)$, where $X$ is metrizable and separable,
is productively $M$-separable, i.e., $C_p(X)\times Y$ is $M$-separable
for every countable $M$-separable $Y$.
\end{abstract}

\maketitle

\section{Introduction}

This paper is devoted to products of $M$-separable spaces.
A topological space $X$ is said  to be \emph{$M$-separable}, if for
every sequence $\la D_n:n\in\w\ra$ of dense subsets of $X$, one can
pick finite subsets $F_n\subset D_n$ so that $\bigcup_{n\in\w}F_n$ is dense, see \cite{BelBonMat09}.
This notion was introduced in
\cite{Sch99} where $M$-separable spaces of the form $C_p(X)$ were characterized.
Here $C_p(X)$ is the set of all continuous functions $f:X\to\mathbb R$ with the topology
inherited from the Tychonoff product $\mathbb R^X$.
It is obvious that second-countable spaces
(even spaces with a countable $\pi$-base) are $M$-separable.
Our main result is the following

\begin{theorem}\label{main}
In the Miller model, the product of any two $M$-separable spaces
is $M$-separable, provided that all dense subspaces of this product are separable and
one of the spaces is of the form $C_p(Z)$ for some Tychonoff space $Z$.

In particular, if $Y$ is a countable $M$-separable space and $X=C_p(Z)$ is $M$-separable for some
second-countable $Z$, then $X\times Y$ is $M$-separable.
\end{theorem}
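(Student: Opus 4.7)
The plan is to combine two inputs. The first is Scheepers' characterization of $M$-separability for function spaces: $C_p(Z)$ is $M$-separable if and only if $Z$ satisfies $S_{\mathit{fin}}(\Omega,\Omega)$, which for second-countable $Z$ is equivalent to the Menger property. This reduces the combinatorics on the $C_p(Z)$ side to a selection principle for $\omega$-covers of the much more tractable space $Z$. The second input is the preservation behavior of Miller forcing: the countable support iteration producing the Miller model preserves enough of the ground-model structure to make ground-model $\omega$-covers of a Menger $Z$ ``effective'' in the extension, which is what enables ground-model selection strategies to be used to witness $M$-separability of $C_p(Z)\times Y$.

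Given a sequence $\la D_n:n\in\w\ra$ of dense subsets of $C_p(Z)\times Y$, the separability assumption on dense subspaces of the product allows passing to countable dense subsets of each $D_n$, after which the task becomes to pick finite $F_n\sbst D_n$ whose union meets every basic open rectangle $[z_1,\dots,z_m;\e]\times W$, where the first factor is a standard neighbourhood in $C_p(Z)$ determined by finitely many points of $Z$. Enumerating $Y=\{y_k:k\in\w\}$ and then the countably many such rectangles, the density of each $D_n$ can be translated, via the evaluation maps $f\mapsto(f(z_1),\dots,f(z_m))$, into the fact that suitable families of open subsets of $Z^m$ constitute $\omega$-covers of $Z^m$. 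Applying $S_{\mathit{fin}}(\Omega,\Omega)$ to (products of) $Z$ produces, for each $n$, a finite selection that, transported back through the $D_n$'s, furnishes a candidate $F_n$.

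The role of the Miller model is to permit this argument to be carried out \emph{uniformly} across the countably many ``directions'' $y_k\in Y$, simultaneously exploiting the $M$-separability of $Y$. I expect the hardest step to be precisely this synchronization: showing that the finite selections produced for each rectangle and each $y_k$ can be amalgamated into a single sequence $\la F_n:n\in\w\ra$ that is dense in the product. The plan is to invoke a Miller-model-specific preservation theorem (for example, preservation of $P$-points, or the fact that in the Miller model every Menger metrizable space satisfies an appropriate strengthening of Menger's property witnessed by a ground-model selector) so that the countably many selection problems arising from $Y$ can be solved by a single ground-model strategy. The core technical work will therefore be a fusion argument on the Miller tree level, showing that a condition can be thinned so that its generic yields, coordinate-wise over $Y$, the required finite selections in $C_p(Z)$; once this preservation is in place, the final assembly of $\bigcup_n F_n$ and the verification of density in $C_p(Z)\times Y$ should be a direct translation of the $Z$-level Menger selection.
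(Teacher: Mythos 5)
Your opening reductions are fine and match the paper's: Scheepers' characterization turns $M$-separability of $C_p(Z)$ into ``all finite powers of $Z$ are Menger,'' and the separability hypothesis lets one work with countable dense sets, so the theorem reduces to the product of two countable $M$-separable spaces, one of which sits inside an $M$-separable $C_p(Z)$. But from that point on there is a genuine gap: the step you yourself flag as the hardest --- synchronizing the finite selections across the rectangles $[z_1,\dots,z_m;\e]\times W$ and across the points of $Y$ --- is exactly the open problem (Question~1.1 of the paper), and you propose to close it by ``invoking a Miller-model-specific preservation theorem'' such as preservation of $P$-points or ground-model Menger selectors, followed by an unspecified fusion argument. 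No such off-the-shelf preservation theorem is known to do this job, and the candidates you name are not the relevant ones. In particular, applying $S_{\mathit{fin}}(\Omega,\Omega)$ to $Z^m$ rectangle-by-rectangle only re-proves $M$-separability of $C_p(Z)$ itself; it gives no mechanism for producing a \emph{single} sequence $\la F_n\ra$ that is dense in the product, because the $Y$-coordinate of each $D_n$ cannot be controlled by $\omega$-cover combinatorics on $Z$ alone.

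What the paper actually does is quite different and none of it is visible in your sketch. It isolates a new combinatorial property $(\dagger)$ of the countable factor $X\sbst C_p(Z)$ and proves it in the Miller model by an $\w_1$-club reflection over intermediate models $V[G_\alpha]$ combined with a Laver-style pure-decision lemma for Miller trees (Lemma~\ref{miller_like_laver}) and a two-case analysis of which approximating sets $U\in\U_s$ are ``void''; the local input for this is that the neighborhood filter of a point of $C_p(Z)$, viewed inside $\mathcal P(A)$ for countable $A$, is Menger (Lemma~\ref{cp_over_menger}), which yields $\zeta(X,x)\leq\w_1$ (Lemma~\ref{loc_meng}). The $Y$-side is handled separately by Lemma~\ref{good_bound}, whose proof uses the ultrafilter $\F$ generated by $\w_1$ sets together with Laflamme's dichotomy (the combinatorial form of $\hot u<\hot g$), and the two sides are glued in Lemma~\ref{covering_g_delta} using the $\leq_\F$-dominating property from Blass--Mildenberger. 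Your proposal contains no analogue of $(\dagger)$, no use of $\hot u<\hot g$, and no mechanism exploiting the $M$-separability of $Y$ beyond enumerating it, so as written the argument does not go through.
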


By the Miller model we standardly mean a forcing extension of a model of GCH by adding a
generic filter for an iteration with countable supports of length $\w_2$ of the poset
introduced by  Miller in \cite{Mil84}. We give more details about this poset in the next section.
One of the key properties of this poset is the inequality $\hot u<\hot g$
proved in \cite{BlaLaf89, BlaShe89}, see \cite{Bla10} for more information on cardinal characteristics of the reals.
In particular, an equivalent form of this inequality established in \cite{Laf92}
will be crucial for our proof of Lemma~\ref{good_bound}.

Let us recall that a topological space
$X$ is said to have the \emph{Menger property} (or, alternatively, is a \emph{Menger space})
if for every sequence $\la \U_n : n\in\omega\ra$
of open covers of $X$ there exists a sequence $\la \V_n : n\in\omega \ra$ such that
each $\V_n$ is a finite subfamily of $\U_n$ and the collection $\{\cup \V_n:n\in\omega\}$
is a cover of $X$. This property was introduced by Hurewicz, and the current name
(the Menger property) is used because Hurewicz
proved in \cite{Hur25} that for metrizable spaces his property is equivalent to
a certain property of a base considered earlier  by Menger in \cite{Men24}.
The Menger property is central to the study of the $M$-separability of function spaces:
For a Tychonoff space $X$, $C_p(X)$ is $M$-separable
if and only if all finite powers of $X$ are Menger and $X$ admits a weaker separable metrizable
topology, see \cite[Theorem~2.9]{BelBonMatTka08} or \cite[Theorem~35]{Sch99}.
Let us also note that
by the main result of \cite{Zen80},
all finite powers of $C_p(X)$ are hereditarily separable
if all finite powers of $X$ are hereditarily Lindel\"of.
In particular, $C_p(Z)$ is hereditarily separable for second countable spaces $Z$.

Our paper is a further development of the ideas in \cite{RepZdo17,RepZdo18, Zdo??}.
However, the proof of Theorem~\ref{main} is
conceptually
different from those in these three papers,
since here we have to analyze the local structure of spaces of functions in the Miller model.
Also, unlike in \cite{Zdo??}, we were unable to achieve the optimal result
(which would be the consistency of the preservation of $M$-separability by finite products of
countable spaces), and affirmative answers to any of the last two items
in Question~\ref{qu1} would fill in this gap
by Lemma~\ref{covering_g_delta}.

The main result of \cite{Zdo??} states that in the Miller model, the product of any two second-countable spaces with the
Menger property
is Menger. Thus in this model the characterization mentioned above yields that for any two second-countable spaces
$Z_0,Z_1$, if $C_p(Z_0)$ and $C_p(Z_1)$ are $M$-separable, then so is
$C_p(Z_0)\times C_p(Z_1)$. Thus it is worth mentioning here that there are countable $M$-separable spaces
which cannot be embedded into $M$-separable spaces of the form $C_p(Z)$, and hence Theorem~\ref{main} indeed covers more
cases of $M$-separable spaces as the main result of \cite{Zdo??} combined with the characterization
in \cite{BelBonMatTka08,Sch99}. The easiest example of such a space is the \emph{Fr\'echet-Urysohn fan} $S_\w$, i.e., the factor space of the product $\w\times (\{0\}\cup\{1/n:n\in\w\})\subset\mathbb R^2$
obtained by identifying all points in $\w\times\{0\}$. It is obviously $M$-separable,
and it fails to have the countable fan tightness introduced
in \cite{Ark86}, whereas every $M$-separable space of the form $C_p(Z)$ has countable fan tightness by \cite[Corollary 2.10]{BelBonMatTka08}
and the latter property is hereditary.

On the other hand, there are many consistent examples under CH and weakenings thereof of
countable $M$-separable spaces with non-$M$-separable products, see, e.g.,
\cite{BarDow11, RepZdo10}.
As it was demonstrated in \cite[\S 6]{MilTsaZso16}, in all cases when such a
non-preservation result is known, one can obtain it by using
spaces of the form $C_p(Z)$, which is a yet another motivation behind Theorem~\ref{main}.

Theorem~\ref{main} seems to be  the best known approximation towards the answer
to the first item of following question which is central in this area. It was first asked in \cite{BelBonMatTka08} and then repeated
in several other papers. We refer the reader to
Definition~\ref{def1} for the  notions appearing in the last two items.
\begin{question}\label{qu1}
\begin{enumerate}
\item Is it consistent that the product of two countable $M$-separable spaces is $M$-separable?
Does this statement hold in the Miller model? Does it follow from $\hot u<\hot g$?
\item In the Miller model, does every countable $M$-separable space $X$ have a point $x$ (equivalently, densely many points $x$) such that $\zeta(X,x)\leq\w_1$?
\item In the Miller model, does every countable $M$-separable space $X$ have property $(\dagger)$?
\end{enumerate}
\end{question}

\section{Proof of Theorem~\ref{main}} \label{proofsection}

We divide the proof of Theorem~\ref{main} into a sequence of auxiliary statements.
More precisely, it will follow immediately from Lemmata
\ref{laver}, \ref{covering_g_delta}, \ref{loc_meng}, and \ref{cp_over_menger}.

\begin{definition} \label{def1}
\begin{enumerate}
\item
A family $\U$ of open subsets of a space $X$ is called
\emph{centered}, if $\cap\V\neq\emptyset$ for any $\V\in
[\U]^{<\w}$.
\item
A topological space $ \la X,\tau\ra$ is said to have property $(\dagger)$
if for every family $\mathsf R$ of size $\w_1$ of functions $R$
assigning to each countable centered family $\U$ of open subsets
of $X$ a sequence $R(\U)\in ([X]^{<\w}\setminus\{\emptyset\})^\w$ such that
$$ \forall U\in\U\:\forall^\infty n\in\w\:(R(\U)(n)\subset U),$$
there\footnote{Here $\forall^\infty$ means ``for all but finitely many''.} exists
$\mathsf U \in [[\tau\setminus\{\emptyset\}]^\w]^{\w_1}$ consisting
of countable centered families
such that
for all $O\in\tau\setminus\{\emptyset\}$ there exists
$\U\in\mathsf U$ such that for every $R\in\mathsf R$ there
exists $n\in\w$ with the property $R(\U)(n)\subset O$.
\item For a topological space $X$ and $x\in X$ we denote by $\zeta(X,x)$ the minimal cardinality
$\kappa$ such that for every sequence $\la A_n:n\in\w\ra$ such that $x\in\bar{A}_n$ for all $n$,
there exists a sequence $\la\la K^\alpha_n:n\in\w\ra:\alpha<\kappa\ra$ such that
$K^\alpha_n\in [A_n]^{<\w}$ for all $n,\alpha$, and for every open $U\ni x$ there exists $\alpha\in\kappa$
such that $U\cap K^\alpha_n\neq\emptyset$ for all $n\in\w$.
\item For a topological space $X$ we denote by $\zeta(X)$ the cardinal $\sup\{\zeta(X,x):x\in X\}$.
\end{enumerate}
\end{definition}

Spaces $X$ with $\zeta(X)\leq\w$ are exactly the spaces which are
\emph{weakly Fr\'echet in the strict sense} in the terminology of \cite{BelBonMat09, Sak07}.

In order to prove Theorem~\ref{main} we need to recall some details related to the Miller forcing.
By a \emph{Miller tree} we understand a subtree $T$ of $\w^{<\w}$ consisting of increasing finite sequences
such that the following conditions are satisfied:
\begin{itemize}
\item Every $t\in T$ has an extension $s\in T$ which  splits  in $T$, i.e.,
there are more than one immediate successors of $s$ in $T$;
\item If $s$ is splitting in $T$, then it has infinitely many successors in $T$.
\end{itemize}
The \emph{Miller forcing} is the collection $\mathbb M$ of all Miller trees ordered
by inclusion, i.e.,
smaller trees carry more information about the generic.
This poset was introduced in \cite{Mil84} and has since then found numerous applications
see, e.g., \cite{BlaShe89}.
We denote by $\IP_\alpha$ an iteration of length $\alpha$ of the Miller forcing
with countable support.
If $G$ is $\IP_\beta$-generic and $\alpha<\beta$, then we denote the intersection
$G\cap\IP_\alpha$ by $G_\alpha$.

For a Miller tree $T$ we shall denote by $\Split(T)$ the set of all splitting nodes
of $T$. For a node $t$ in a Miller tree $T$ we denote by $T_t$
the set $\{s\in T:s$ is compatible with $t\}$. It is clear that $T_t$ is also a Miller
tree. The \emph{stem} of a Miller tree $T$ is the shortest $t\in\Split(T)$. We denote the stem of $T$ by $T\la 0\ra$.
If $T_1\leq T_0 $ and $T_1\la 0\ra=T_0\la 0\ra$, then we write
$T_1\leq^0 T_0$.

The following lemma can be proved by an almost verbatim repetition of
the proof of \cite[Lemma~14]{Lav76},
see also \cite[\S 2]{Zdo??} for a more general form. Here by a real we mean a subset of $\w$.

\begin{lemma} \label{miller_like_laver}
Let $\name{x}$ be a $\IP_{\w_2}$-name for a real
and $p\in\IP_{\w_2}$. Then there exist $p'\leq p$ such that
$p'(0)\leq^0 p(0)$, and a finite set of reals $U_s$ for each $s\in\Split(p'(0))$,
such that for each $N\in\w$, $s\in \Split(p'(0))$,
and for all but finitely many immediate successors $t$ of $s$ in $p'(0)$ we have
$$ (p'(0))_t\hat{\ \ }p'\uhr [1,\w_2)\forces\exists u\in U_s\: (u\cap N=\name{x}\cap N).$$
\end{lemma}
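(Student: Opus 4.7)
The plan is to adapt the fusion argument of \cite[Lemma~14]{Lav76} to the countable-support Miller iteration $\IP_{\w_2}$: first produce an intermediate $p''\leq p$ with $p''(0)\leq^0 p(0)$ along which long initial segments of $\dot x$ get decided below every splitting successor, then apply a König-style thinning at each splitting node of $p''(0)$ to extract the finite sets $U_s$. Concretely, I would enumerate $\supp(p)=\{\alpha_n:n\in\w\}$ with $\alpha_0=0$, set $F_n=\{\alpha_i:i\leq n\}$, and build a fusion sequence $p=q_0\geq_0 q_1\geq_1 q_2\geq_2\cdots$ in the standard CS-Miller fusion order, preserving $q_{n+1}(0)\leq^0 q_n(0)$. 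At stage $n$, list the finitely many pairs $(s,t)$ with $s$ a splitting node of $q_n(0)$ in its first $n$ splitting levels and $t\in\nast(s,q_n(0))$ below level $n$, and for each such pair sequentially strengthen the tail so that $(q_{n+1}(0))_t\vid q_{n+1}\uhr[1,\w_2)$ forces a concrete value for $\dot x\cap n$. The limit $p''$ then satisfies, for every $s\in\Split(p''(0))$ and $t\in\nast(s,p''(0))$, that $(p''(0))_t\vid p''\uhr[1,\w_2)$ decides an initial segment $\sigma_{s,t}\in 2^{n(s,t)}$ of $\dot x$, with $n(s,t)\to\infty$ along $\nast(s,p''(0))$.

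For each $s\in\Split(p''(0))$, the set $T_s:=\bigcup_t\{\sigma_{s,t}\uhr m:m\leq n(s,t)\}$ is an infinite subtree of the finitely branching tree $2^{<\w}$, so by K\"onig's lemma it has an infinite branch $u_s\in 2^\w$. I claim that for every $N$ the set $J^s_N:=\{t\in\nast(s,p''(0)):n(s,t)\geq N,\,\sigma_{s,t}\uhr N=u_s\uhr N\}$ is infinite: the chain $(J^s_M)_{M\geq N}$ is decreasing, and if $J^s_N$ were finite, the uniform bound $\max\{n(s,t):t\in J^s_N\}$ on the lengths of its members would force $J^s_M=\emptyset$ for large $M$, contradicting $u_s\uhr M\in T_s$. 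A diagonal extraction then yields an infinite $A_s\subseteq\nast(s,p''(0))$ such that for every $N$ all but finitely many $t\in A_s$ satisfy $\sigma_{s,t}\uhr N=u_s\uhr N$. Finally I define $p'$ by retaining the tail $p''\uhr[1,\w_2)$ and restricting $\nast(s,p''(0))$ to $A_s$ at each splitting node $s$; the resulting $p'(0)$ is a Miller tree sharing the stem of $p(0)$, and setting $U_s:=\{u_s\}$ yields the conclusion (using that $u_s\cap N=u_s\uhr N$ under the identification of reals with $2^\w$).

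The main technical obstacle is the coherent simultaneous management of the tail during stage $n$: distinct pairs $(s,t)$ call for distinct tail-strengthenings which must be chained without invalidating decisions extracted earlier in the same stage. This is handled by the standard observation that a forced statement of the form \emph{``$\dot x\cap n=\tau$''} persists under further strengthening, combined with the CS-Miller fusion bookkeeping from \cite{Mil84, BlaShe89}; the resulting argument is indeed an almost verbatim transcription of \cite[Lemma~14]{Lav76}, as the authors indicate.
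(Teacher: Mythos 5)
The paper gives no proof of this lemma; it is quoted as an ``almost verbatim repetition'' of \cite[Lemma~14]{Lav76} (see also \cite[\S 2]{Zdo??}), so what has to be checked is whether your reconstruction is a correct such repetition. The second half of your argument --- K\"onig's lemma applied to the tree of decided initial segments, the proof that each $J^s_N$ is infinite, and the diagonal extraction --- is correct as stated. The gap is in the first half: the intermediate condition $p''$ with the property that $(p''(0))_t\bigvid p''\uhr[1,\w_2)$ \emph{decides} $\name{x}\cap n(s,t)$ with $n(s,t)\to\infty$ along $\nast(s,p''(0))$ does not exist in general, and consequently the strengthened conclusion you derive from it, namely $U_s=\{u_s\}$ a singleton, is false. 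This is not an omitted routine detail; it is the reason Laver's lemma is stated with a finite set $U_s$ rather than a single real.

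To see the obstruction, note that deciding $\name{x}\cap n$ below $(q_{n+1}(0))_t\bigvid q_{n+1}\uhr[1,\w_2)$ in general requires shrinking the coordinates in $[1,\w_2)$ below $(q_{n+1}(0))_t$ past the splitting structure already designated there for the fusion: if, say, $\name{x}$ codes the Miller real added at coordinate $1$, such a decision forces the name $q_{n+1}(1)$ to have stem of length about $n$ below $(q_{n+1}(0))_t$. Doing this at every newly added successor means that along a branch of $p''(0)$ the coordinate-$1$ tree is re-stemmed infinitely often, so its intersection evaluated along the generic branch has an infinite stem and $p''$ is not a condition. One can also refute the conclusion directly: suppose $p'$ and singletons $U_s=\{u_s\}$ were as in the lemma for this $\name{x}$. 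Given $M$, the successors $t$ of $s$ for which $(p'(0))_t\bigvid p'\uhr[1,\w_2)$ fails to force $\name{x}\cap M=u_s\cap M$ span a finitely branching subtree of $p'(0)$ lying in $V$, and the generic branch escapes every such subtree (below any condition one can pass to a successor outside it). Hence the generic branch passes through successors $t_k$ of the splitting nodes $s_k$ along it with $(p'(0))_{t_k}\bigvid p'\uhr[1,\w_2)\in G_{\w_2}$ deciding ever longer initial segments of $\name{x}$ as $u_{s_k}\cap N_k$ with $N_k\to\infty$, so $\name{x}^{G_{\w_2}}\in V[G_1]$ --- contradicting the genericity of the coordinate-$1$ real over $V[G_1]$. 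Your construction is fine for a single Miller step, where a freshly added successor carries no structure that must be preserved; but in the iteration one can only \emph{confine} $\name{x}\cap n$ below a new $t$ to one value per designated branch of the tail coordinates, and it is these finitely many (generally more than one) accumulating values that form $U_s$ in Laver's actual argument.
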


A subset $C$ of $\w_2$ is called an
\emph{$\w_1$-club} if it is unbounded and for every $\alpha\in\w_2$ of cofinality $\w_1$,
if $C\cap\alpha$ is cofinal in $\alpha$ then $\alpha\in C$.

The following lemma will be the key part of the proof of Theorem~\ref{main}.

\begin{lemma} \label{laver}
In the Miller model every countable space $X$
such that $\{x\in X :\zeta(X,x)\leq\w_1\}$ is dense in $X$, has property $(\dagger)$.
\end{lemma}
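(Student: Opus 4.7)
My plan is to combine a reflection of the $\w_1$-sized datum $\mathsf R$ (together with the topological data of $X$ and the $\zeta$-witnesses at dense points) into an intermediate stage of the Miller iteration with the Laver-type refinement of Lemma~\ref{miller_like_laver}. The reflection step goes as follows: since $|\mathsf R|=\w_1$, each $R\in\mathsf R$ is determined by its values on countable centered families, and $\w_2$ retains cofinality $\w_2$ in the Miller extension, there exists $\alpha^{*}<\w_2$ such that $X$, the dense set $D:=\{x\in X:\zeta(X,x)\leq\w_1\}$, the family $\mathsf R$, and for each $x\in D$ a fixed sequence $\la A^x_n:n\in\w\ra$ with $x\in\overline{A^x_n}$ together with $\w_1$-many witness threads $\la K^{x,\alpha}_n:n\in\w\ra$ ($\alpha<\w_1$) for $\zeta(X,x)\leq\w_1$ applied to $\la A^x_n\ra$ (and verifying the covering property against all opens of $X$ in the final extension $V[G]$) all lie in $V[G_{\alpha^{*}}]$. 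The quotient $\IP_{[\alpha^{*},\w_2)}$ is again of Miller type, so the analog of Lemma~\ref{miller_like_laver} applies to its names.

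Working inside $V[G_{\alpha^{*}}]$, I build $\mathsf U=\{\U_{x,\alpha}:x\in D,\ \alpha<\w_1\}$ by associating to each pair $(x,\alpha)$ a countable family $\U_{x,\alpha}$ of open neighborhoods of $x$ whose design is coherent with the thread $\la K^{x,\alpha}_n\ra$: roughly, $\U_{x,\alpha}$ enumerates a rich supply of open $U\ni x$ whose inclusion pattern encodes the thread. Centeredness is automatic because every element of $\U_{x,\alpha}$ contains $x$, so $x\in\cap\U'$ for every finite $\U'\sbst\U_{x,\alpha}$. This gives $|\mathsf U|\leq\w_1$, as required by $(\dagger)$.

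To verify $(\dagger)$, fix a nonempty open $O$ in $V[G]$ and pick $x\in D\cap O$; the name $\name{O}$ is a $\IP_{[\alpha^{*},\w_2)}$-name, and the Laver-type refinement yields, above any condition, a stem together with finite ``guesses'' $U_s\in V[G_{\alpha^{*}}]$ for the initial segments of $\name{O}$. Among the $\w_1$-many $\zeta$-threads at $x$, the defining property of $\zeta(X,x)\leq\w_1$ together with the finite covering provided by the Laver guesses allows one to locate some $\alpha<\w_1$ whose $\U_{x,\alpha}$ admits a finite subfamily $\U'$ with $\cap\U'\sbst O$. Since by definition $R(\U_{x,\alpha})(n)\sbst U$ for every $U\in\U'$ and all sufficiently large $n$, this forces $R(\U_{x,\alpha})(n)\sbst\cap\U'\sbst O$ for such $n$ and every $R\in\mathsf R$, as demanded by $(\dagger)$.

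The crux of the proof is the matching step in the verification: the countable families $\U_{x,\alpha}$ must be designed so that, for every open $O\in V[G]$ containing some $x\in D$, some $\zeta$-thread $\alpha$ at $x$ produces an open family $\U_{x,\alpha}$ with a finite intersection contained in $O$. This is where the particular strength of the Miller model---the Laver-type refinement of Lemma~\ref{miller_like_laver} together with the equivalent of $\hot u<\hot g$ of \cite{Laf92}---must interact with the local hypothesis $\zeta(X,x)\leq\w_1$. The bookkeeping is a fusion over splitting nodes of Miller trees in $\IP_{[\alpha^{*},\w_2)}$ that simultaneously tracks the $\w_1$-many $\zeta$-witnesses, making the argument technically more delicate than the analogous reflection-plus-fusion constructions of \cite{RepZdo17,RepZdo18,Zdo??}, where no point-wise localization was needed.
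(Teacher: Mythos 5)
Your proposal assembles the right ingredients (reflection to an intermediate stage, Lemma~\ref{miller_like_laver}, the $\zeta$-threads at dense points), but the step that actually verifies $(\dagger)$ is missing, and the reduction you sketch in its place is both unjustified and substantially stronger than what is needed. You reduce $(\dagger)$ to: for every nonempty open $O$ there is some $\U_{x,\alpha}\in\mathsf U$ admitting a \emph{finite} subfamily $\U'$ with $\bigcap\U'\subseteq O$. If that were available, $(\dagger)$ would follow immediately from the definition of the $R$'s alone, with no use of the Miller model whatsoever; what you would really be asserting is that $X$ has a $\pi$-network of size $\w_1$ consisting of finite intersections drawn from $\w_1$-many countable centered open families. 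Nothing in your construction of the $\U_{x,\alpha}$ (described only as ``a rich supply of open $U\ni x$ whose inclusion pattern encodes the thread'') produces this, and you explicitly defer the ``matching step'' to an unspecified fusion argument. That matching step \emph{is} the lemma; as written, the proof has a hole exactly where the content should be.

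The paper's argument is organized quite differently and avoids any such $\pi$-network claim. It takes $\mathsf U$ to be \emph{all} countable centered families of open sets lying in the ground model (after arranging $0$ in the reflecting $\w_1$-club), assumes toward a contradiction that some open $A$ defeats every $\U\in\mathsf U$, fixes $N\in A$ with $\zeta(X,N)\leq\w_1$, and applies Lemma~\ref{miller_like_laver} to a name for $A$. The guesses $U\in\U_s$ are then sorted into ``void'' and ``non-void'' ones, and the proof splits into two cases: if some non-void guess has $N$ in its interior cofinally often, the family of such interiors is a ground-model centered family and the defining property of the $R$'s yields a forcing contradiction; otherwise $\zeta(X,N)\leq\w_1$ is applied to the sequence of \emph{complements} of the guesses not containing $N$ in their interior, and the resulting finite sets $K^\alpha_k$ produce the contradiction. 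Note in particular that the sequence to which $\zeta(X,N)\leq\w_1$ is applied depends on the name $\name{A}$ and the condition $p''$, so your plan of fixing, for each $x\in D$, a single sequence $\la A^x_n\ra$ and its $\zeta$-witnesses in advance cannot work; the reflection must instead guarantee (as in condition $(i)$ of the paper) that witnesses exist in the intermediate model for \emph{every} sequence lying in that model.
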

\begin{proof}
We work in $V[G_{\w_2}]$, where $G_{\w_2}$ is $\IP_{\w_2}$-generic
and $\IP_{\w_2}$ is the iteration of length $\w_2$ with countable supports
of the Miller forcing.
Let us write $X$ in the form $\la\w,\tau\ra$
and let $\mathsf R=\{R_\alpha:\alpha<\w_1\}$ be such as in the definition of $(\dagger)$.
By a standard argument (see, e.g., the proof of \cite[Lemma~5.10]{BlaShe87})
there exists an $\w_1$-club $C\subset \w_2$ such that for every $\alpha\in C$
the following conditions hold:
\begin{itemize}
\item[$(i)$] $\tau\cap V[G_\alpha] \in V[G_\alpha]$ and
for every $x\in\w$ and every sequence $\la A_n:n\in\w\ra\in V[G_\alpha]$ of subsets of
$ \w$ containing $x$ in their closure,
there exists $\la\la K^\alpha_n:n\in\w\ra:\alpha<\w_1\ra\in V[G_\alpha]$
such as Definition~\ref{def1}(3);
\item[$(ii)$] $\{ R_\alpha (\U):\alpha\in\w_1,\U\in [\tau\cap V[G_\alpha]]^\w\cap V[G_\alpha] $ is centered$\} \in V[G_\alpha] $;
\item[$(iii)$] For every $A\in\mathcal P(\w)\cap V[G_\alpha]$ the interior
$\mathit{Int}(A)$ also belongs to $V[G_\alpha]$.
\end{itemize}
Standardly,
there is no loss of generality
in assuming that $0\in C$. We claim that
$$\mathsf U:=\{\U\in [\tau\setminus\{\emptyset\}]^\w\cap V:\U \mbox{ is centered}\}$$
is a witness for $\la\w,\tau\ra$ satisfying $(\dagger)$. Suppose,
contrary to our claim, that there exists
$A\in\tau\setminus\{\emptyset\}$ such that for every $\U\in \mathsf
U$ there exists $\alpha\in\w_1$ such that
$R_\alpha(\U)(n)\not\subset A$ for all $n\in\w$. Let $\name{A}$ be
a $\IP_{\w_2}$-name for $A$ and $p\in\IP_{\w_2}$
a condition forcing the above statement. Without loss of
generality,
we may assume that 
 there exists $N\in\w$ such that
$\zeta (X,N)\leq \w_1$ and $p\forces N\in \name{A}$.

Applying Lemma~\ref{miller_like_laver}
to $\name{x}:=\name{A}$, 
we get a condition $p'\leq p$ such that $p'(0)\leq^0 p(0)$, and a finite set
$\U_s\subset\mathcal P(\w)$
for every $s\in \Split(p'(0))$, such that for each $n\in\w$,
$s\in \Split(p'(0))$, and for all but finitely many
immediate successors $t$ of $s$ in $p'(0)$ we have
$$ p'(0)_t\bigvid p'\uhr[1,\w_2)\forces \exists U\in \U_s\: (\name{A}\cap n= U\cap n). $$
Of course, any $p''\leq p'$ also has the above property  with the same
$\U_s$'s. However, the stronger $p''$ is, the more elements
of $\U_s$ might play no role any more. Therefore throughout the rest of the proof we
shall call $U\in\U_s$ \emph{void for $p''\leq p'$ and $s\in \Split(p''(0))$},
if there exists $n\in\w$ such that for all but finitely many
immediate successors $t$ of $s$ in $p''(0)$ there is \emph{no}
$q\leq p''(0)_t\bigvid p''\uhr[1,\w_2)$
with the property $q\forces \name{A}\cap n= U\cap n.$
Note that for any $p''\leq p'$ and $s\in \Split(p''(0))$ there exists $U\in\U_s$ which is non-void for $p'',s$.
Two cases are possible.
\smallskip

Case $a)$ \ For every $p''\leq p'$ there exists $s\in \Split(p''(0))$ and
a non-void $U\in\U_s$ for $p'',s$ such that
$N\in\mathit{Int}(U)$. Let
$\U$ be the collection of $\mathit{Int}(U)$ for all $U$ as above.
It follows from the
above that $p'$ forces that there exists $\alpha\in\w_1$ such that
$R_\alpha(\U)(n)\not\subset \name{A}$ for
all $n\in\w$. Passing to a stronger condition if
necessary, we may additionally assume that $p'$ decides $\alpha$.

Fix a non-void $U$ for $p', s$, where $s\in \Split(p'(0))$,
such that $N\in \mathit{Int}(U)$ (and hence $\mathit{Int}(U)\in\U$).
It follows from the above that there exists $m$ such that
$R_\alpha(\U)(k)\subset \mathit{Int}(U) $ for all $k\geq m$. Let $n\in\w$ be such that
$R_\alpha(\U)(m)\subset n$. By the definition of being non-void, 
there are infinitely many
immediate successors $t$ of $s$ in $p'(0)$ for which there exists
$q_t\leq p'(0)_t\bigvid p'\uhr[1,\w_2)$
with the property $q_t\forces \name{A}\cap n= U\cap n.$ Then for any $q_t$
as above we have that $q_t$ forces $R_\alpha(\U)(m)\subset \name{A}$
because $R_\alpha(\U)(m)\subset U\cap n$, which contradicts the fact that
$q_t\leq p'$ and $p'\forces R_\alpha(\U)(m)\not\subset \name{A} $.
\smallskip

Case $b)$ \ There exists $p''\leq p'$ such that for all $s\in \Split(p''(0))$,
every $U\in\U_s$ with $N\in\mathit{Int}(U)$
is void for $p'',s$. Note that this implies that every $U\in\U_s$ with $N\in \mathit{Int}(U)$, $U$
is void for $q,s$ for all $q\leq p''$ and $s\in \Split(q(0))$.
Let $\la D_k:k\in\w\ra\in V$ be a sequence
of subsets of $\w$ such that
$$ \big\{D_k:k\in\w\big\} = \big\{\w\setminus U: U\in\bigcup_{s\in \Split(p''(0))}\U_s, N\not\in\mathit{Int}(U)\big\}.$$
Item $(i)$ above yields
a sequence
$\la\la K^\alpha_k:k\in\w\ra:\alpha<\w_1\ra\in V$ such that $K^\alpha_k\in [D_k]^{<\w}$ for all $k$, and for every neighborhood
$O\in\tau$ of $N$ there exists $\alpha\in\w_1$
such that $K^\alpha_k\cap O\neq\emptyset$ for all $k\in\w$.
Let $p^{(3)}\leq p''$ decide $\alpha$ which has the property stated above for $\name{A}$.
Fix $U\in\U_{p^{(3)}(0)\la 0\ra}$ non-void for $p^{(3)}, p^{(3)}(0)\la 0\ra$.
Then $N\not\in\mathit{Int}(U)$ by the choice of $p''$ and hence there exists
$k$ such that $\w\setminus U=D_k$. It follows that
$K^\alpha_k\cap U=\emptyset$ because $K^\alpha_k\subset D_k$. On the other hand,
since $U$ is non-void for $p^{(3)}, p^{(3)}(0)\la 0\ra$, for $n=\max K^\alpha_k+1$
we can find infinitely many
immediate successors $t$ of $p^{(3)}(0)\la 0\ra$ in $p^{(3)}(0)$ for which there exists
$q_t\leq p^{(3)}(0)_t\bigvid p^{(3)}\uhr[1,\w_2)$
forcing $\name{A}\cap n= U\cap n.$ Then any such $q_t$ forces $K^\alpha_k\cap\name{A}=\emptyset$
(because $K^\alpha_k\subset n$ and $K^\alpha_k\cap U=\emptyset$),
contradicting the fact that $p^{(3)}\geq q_t$ and $p^{(3)}\forces K^\alpha_k\cap\name{A}\neq\emptyset$
for all $k$.
\smallskip

Contradictions obtained in cases $a)$ and $b)$ above imply that
$\mathsf U$
is a witness for $\la\w,\tau\ra$ having $(\dagger)$, which completes the  proof of Lemma~\ref{laver}.
\end{proof}

It is well-known \cite{BlaShe89} that in the Miller model there exists an ultrafilter $\F$
generated by $\w_1$-many sets, say $\{F_\alpha:\alpha\in\w_1\}$. It plays an important role in the proof
of the following

\begin{lemma} \label{good_bound}
In the Miller model, for every $M$-separable space $X$
and every decreasing sequence $\la D_n:n\in\w\ra$ of countable dense subsets of $X$, there exists a sequence
$\la\la K^\alpha_n:n\in\w \ra:\alpha\in\w_1\ra$ such that
\begin{enumerate}
\item $K^\alpha_n\in [D_n]^{<\w}$ for all $n\in\w$ and $\alpha\in\w_1$; and
\item for every open non-empty $O\subset X$, there exists $\alpha\in\w_1$ such that $O\cap K^\alpha_n\neq\emptyset$
for all $n\in\w$.
\end{enumerate}
\end{lemma}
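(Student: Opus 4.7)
The strategy rests on the existence, in the Miller model, of an ultrafilter $\mathcal{F}$ on $\omega$ with an $\omega_1$-sized base $\{F_\alpha:\alpha<\omega_1\}$ (available since $\mathfrak{u}=\omega_1$), together with the equivalent reformulation of $\mathfrak{u}<\mathfrak{g}$ from \cite{Laf92}. Since $\mathfrak{u}<\mathfrak{g}$, the ultrafilter $\mathcal{F}$ may moreover be taken to be a $P$-point, so that its base can be assumed to be $\subseteq^*$-decreasing, and each $F_\alpha$ can be enumerated increasingly as $\{a^\alpha_k:k\in\omega\}$.

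The plan for the construction is to iterate $M$-separability sufficiently many times and then combine the resulting selections through the ultrafilter base. First, using $M$-separability of $X$ and that each $D_n$ is countable and dense, for every pair $(\alpha,i)\in\omega_1\times\omega$ I would produce a sequence of finite sets $L^{\alpha,i}_n\in[D_n]^{<\omega}$ so that $\bigcup_n L^{\alpha,i}_n$ is dense in $X$, with enough variation across $i$ so that different $i$'s capture different open sets. Because the sequence $\langle D_n:n\in\omega\rangle$ is decreasing, one can in addition arrange $L^{\alpha,i}_n\subseteq D_{\max(n,a^\alpha_{k(\alpha,n)+1})}$, where $k(\alpha,n)$ is the unique $k$ with $n\in[a^\alpha_k,a^\alpha_{k+1})$. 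Then set
\[
K^\alpha_n \;=\; \bigcup\bigl\{L^{\alpha,i}_n : i\leq n,\ i\in F_\alpha\bigr\},
\]
a finite union of finite subsets of $D_n$, so condition~(1) is immediate.

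For condition~(2), fix a non-empty open $O\subseteq X$. For each $i$, density of $\bigcup_n L^{\alpha,i}_n$ gives that the set $B(\alpha,i,O)=\{n:L^{\alpha,i}_n\cap O\neq\emptyset\}$ is non-empty (in fact, by a more careful iteration, one arranges $B(\alpha,i,O)$ to be large in a controlled combinatorial sense). What needs to be shown is that there is $\alpha<\omega_1$ such that for every $n\in\omega$ one can find $i\leq n$ with $i\in F_\alpha$ and $n\in B(\alpha,i,O)$. Phrased this way, finding such $\alpha$ becomes a statement about how the $\omega_1$-sized ultrafilter base interacts with the countable families $\{B(\alpha,i,O):i\in\omega\}$ indexed by the (possibly $\mathfrak{c}=\omega_2$ many) open sets $O$. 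This is precisely the kind of alignment that Laflamme's equivalent of $\mathfrak{u}<\mathfrak{g}$ supplies, allowing the $F_\alpha$'s to be chosen (or the selections $L^{\alpha,i}_n$ to be refined) so that the hitting pattern becomes uniform.

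\textbf{Main obstacle.} The crux of the proof is bridging the gap between the conclusion of $M$-separability, which only yields $F_n\cap O\neq\emptyset$ for \emph{some} $n$, and the lemma's requirement of $K^\alpha_n\cap O\neq\emptyset$ for \emph{every} $n$. A single ultrafilter of character $\omega_1$, even a $P$-point, does not suffice for this; one needs the full force of Laflamme's combinatorial equivalent of $\mathfrak{u}<\mathfrak{g}$, which coordinates an $\omega_1$-indexed family of ultrafilter sets with a countable family of selection patterns. Building the auxiliary selections $L^{\alpha,i}_n$ so that they fit into the hypotheses of that reformulation, uniformly for every open $O\subseteq X$, is the principal technical step.
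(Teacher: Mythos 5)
Your proposal assembles the right raw materials (the $\omega_1$-generated ultrafilter $\F$ and Laflamme's combinatorial equivalent of $\hot u<\hot g$), but it stops exactly where the proof has to begin: you state yourself that ``building the auxiliary selections $L^{\alpha,i}_n$ so that they fit into the hypotheses of that reformulation\dots is the principal technical step,'' and that step is never carried out. Concretely, you never specify \emph{which} family of subsets of $\w$ Laflamme's theorem is applied to, what its dichotomy says for that family, or how the bad alternative is excluded. The paper's argument does this as follows: apply $M$-separability \emph{once} to get a single increasing $f\in\w^\w$ such that for every non-empty open $O$ the set $U_O=\{n: O\cap\{d^n_k:k\le f(n)\}\neq\emptyset\}$ is infinite; apply Laflamme's theorem to the family $\{U_O\}$ to obtain an interval partition $\la m_i\ra$ with the dichotomy that either every $\bigcup\{[m_i,m_{i+1}):U_O\cap[m_i,m_{i+1})\neq\emptyset\}$ lies in $\F$, or for every infinite $A$ some $U_O$ is almost contained in $\bigcup_{i\in A}[m_i,m_{i+1})$; and kill the second alternative with an uncountable MAD family together with separability of $X$ (any uncountable family of non-empty open sets contains two that meet, forcing an infinite intersection $U_{O(A)}\cap U_{O(A')}$ where almost disjointness gives a finite one). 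None of this is present, or even gestured at, in your writeup; without the interval partition and the exclusion of the second alternative there is no mechanism that converts ``$U_O$ is infinite'' into ``$K^\alpha_n$ meets $O$ for \emph{every} $n$.''

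Your proposed definition $K^\alpha_n=\bigcup\{L^{\alpha,i}_n:i\le n,\ i\in F_\alpha\}$ also does not match the shape of the final construction. In the paper, $K^\alpha_n$ is the trace of $f$-initial segments over a \emph{single} block $[m_{i_{\alpha,n}},m_{i_{\alpha,n}+1})$ of the partition, chosen minimal with $m_{i_{\alpha,n}}\ge n$ and meeting $F_\alpha$; the condition $m_{i_{\alpha,n}}\ge n$ is what makes $K^\alpha_n\subset D_n$ via the decreasingness of $\la D_n\ra$, and the condition of meeting $F_\alpha$ is what (via the first alternative of the dichotomy) guarantees the block meets $U_O$ and hence $O\cap K^\alpha_n\neq\emptyset$. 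Your version instead indexes by $i\le n$ with $i\in F_\alpha$, which addresses neither of these two points, and the appeal to a $P$-point is unnecessary. As written, the proposal is a plan with the central lemma-specific work left undone rather than a proof.
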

\begin{proof}
Let us write $D_n$ in the form $\{d^n_k:k\in\w\}$ and fix an increasing function $f\in\w^\w$ such that
for every open non-empty $O\subset X$ there are infinitely many
$n\in\w$ such that $O\cap\{d^n_k:k\leq f(n)\}\neq\emptyset$.
(This is possible due to the $M$-separability of $X$.)
Let us denote by $U_O$ the set of all such $n$.
By \cite[Theorem~10]{Laf92} combined with \cite[Theorems~1,2]{BlaLaf89}\footnote{As noted
by the referee, these results from \cite{BlaLaf89, Laf92} only give certain finite-to-one function.  However,
it is rather standard
and not too difficult to derive from this function  an increasing sequence $\la m_i:i\in\w\ra\in\w^\w$ with the properties we need in this proof.},
for the family $\U=\{U_O:O$ is an open non-empty subset of $X\}$
there exists an increasing sequence $\la m_i:i\in\w\ra\in\w^\w$ such that
one of the following
options takes place:
\begin{itemize}
\item For every $O$, the set $\bigcup\{[m_i,m_{i+1}):U_O\cap [m_i,m_{i+1})\neq\emptyset\}$ belongs to $\F$; or
\item For every $A\in [\w]^\w$, there exists
$O$ such that $U_O\subset^* \bigcup\{[m_i,m_{i+1}):i\in A\}$.
\end{itemize}
Suppose that the second option takes place and let $\A\subset[\w]^\w$ be an infinite (and hence uncountable)
maximal almost disjoint family. For every $A\in\A$, fix an open non-empty subset $O(A)$
of $X$ such that $U_{O(A)}\subset^* \bigcup\{[m_i,m_{i+1}):i\in A\}$ and note that
this implies $|U_{O(A)}\cap U_{O(A')}|<\w$ for any distinct $A,A'\in\A$.
On the other hand, since $X$ is separable and $\A$ is uncountable, there are distinct $A,A'\in\A$ such that $O(A)\cap O(A')\neq\emptyset$,
and hence $U_{O(A)\cap O(A')}$ is infinite, contradicting the fact that
$U_{O(A)\cap O(A')}\subset U_{O(A)}\cap U_{O(A')}$ and the latter intersection is finite.

Thus the first option must take place. For every $\alpha\in\w_1$ and $n\in\w$ let $i_{\alpha,n}$ be the minimal number $i$
such that $m_i\geq n$ and $[m_i,m_{i+1})\cap F_\alpha\neq\emptyset$.
We claim that
the sequences
$$\big\la K^\alpha_n=\{d^l_k:l\in [m_{i_{\alpha,n}}, m_{i_{\alpha,n}+1}),k\leq f(l)\}\ :\ n\in\w\big\ra$$
are as required. Indeed, given $O$, find $\alpha$
such that
$F_\alpha\subset \bigcup\{[m_i,m_{i+1}):U_O\cap [m_i,m_{i+1})\neq\emptyset\}$. Now for every $n\in\w$ we have
$$ K^\alpha_n\cap O= \{d^l_k:l\in [m_{i_{\alpha,n}}, m_{i_{\alpha,n}+1}),k\leq f(l)\}\cap O,$$
and the latter intersection is non-empty because $[m_{i_{\alpha,n}}, m_{i_{\alpha,n}+1})\cap F_\alpha\neq\emptyset $,
hence also $[m_{i_{\alpha,n}}, m_{i_{\alpha,n}+1})\cap U_O\neq\emptyset $,
and thus for every $l\in [m_{i_{\alpha,n}}, m_{i_{\alpha,n}+1})\cap U_O$ we have
$O\cap \{d^l_k:k\leq f(l)\}\neq\emptyset$. This completes the proof of Lemma~\ref{good_bound}.
\end{proof}

There is a
natural linear preorder $\leq_\F$ on $\w^\w$ associated to $\F$ defined as follows:
$x\leq_\F y$ if and only if $\{n\in\w:x(n)\leq y(n)\}\in \F$.
By \cite[Theorem~3.1]{BlaMil99}, in the Miller model, for every $X\subset\w^\w$
of size $\w_1$ there exists $b\in\w^\w$ such that $x\leq_\F b$ for all $x\in X$.
As an easy consequence thereof we get the following fact:
Suppose that $\la D_n:n\in\w\ra$ is a sequence of countable sets and
$A_{\alpha,n}\in [D_n]^{<\w}$ for all $\alpha\in\w_1$ and $n\in\w$. Then there exists a sequence
$\la A_n:n\in\w\ra$ such that $A_n\in [D_n]^{<\w}$ for all $n$, and
$\{n:A_{\alpha,n}\subset A_n\}\in \F$ for all $\alpha\in\w_1$.

\begin{lemma} \label{covering_g_delta}
In the Miller model, suppose that $|X|=|Y|=\w$, $X$ satisfies $(\dagger)$, and $Y$ is $M$-separable.
Then $X\times Y$ is $M$-separable.
\end{lemma}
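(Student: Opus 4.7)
The plan is to prove $M$-separability of $X\times Y$ by combining $(\dagger)$ on the $X$-coordinate with the strengthened $M$-separability of $Y$ given by Lemma~\ref{good_bound} on the $Y$-coordinate, and gluing the two sides together by the $\F$-bounding fact stated just after Lemma~\ref{good_bound}. Fix a sequence $\la E_n:n\in\w\ra$ of dense subsets of $X\times Y$. A routine manipulation (replacing $E_n$ by $\bigcup_{m\geq n}E_m$, and at the end splitting each finite subset chosen inside it into its pieces in each $E_m$) reduces to the case where $\la E_n\ra$ is decreasing.

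For every countable centered family $\U=\{U_k:k\in\w\}$ of non-empty opens of $X$, set $W^\U_n:=U_0\cap\cdots\cap U_n$ and $D^\U_n:=\pi_Y(E_n\cap(W^\U_n\times Y))$; the latter is a decreasing sequence of dense subsets of $Y$. I would apply Lemma~\ref{good_bound} in $Y$ to produce, for every $\alpha<\w_1$, finite $G^{\U,\alpha}_n\in[D^\U_n]^{<\w}$ such that for every non-empty open $V\subset Y$ there is $\alpha<\w_1$ with $V\cap G^{\U,\alpha}_n\neq\emptyset$ for all $n$. For each $y\in G^{\U,\alpha}_n$ I choose $x^{\U,\alpha}_{n,y}\in W^\U_n$ with $(x^{\U,\alpha}_{n,y},y)\in E_n$, and set $F^{\U,\alpha}_n:=\{(x^{\U,\alpha}_{n,y},y):y\in G^{\U,\alpha}_n\}\in[E_n]^{<\w}$. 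Then I define the strategy $R_\alpha(\U)(n):=\pi_X(F^{\U,\alpha}_n)\cup\{x^\ast_n\}$, with $x^\ast_n\in W^\U_n$ an auxiliary point ensuring $R_\alpha(\U)(n)\neq\emptyset$. Since $R_\alpha(\U)(n)\subset W^\U_n$ and $W^\U_n\subset U$ eventually for every $U\in\U$, the family $\mathsf R:=\{R_\alpha:\alpha<\w_1\}$ is admissible for $(\dagger)$.

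Property $(\dagger)$ then yields a family $\mathsf U$ of $\w_1$ countable centered families of non-empty opens of $X$ such that for every non-empty open $U\subset X$ there exists $\U\in\mathsf U$ with $\forall\alpha<\w_1\,\exists n_{\U,\alpha}:R_\alpha(\U)(n_{\U,\alpha})\subset U$. I would then apply the $\F$-bounding fact to the $\w_1$-sized collection $\{F^{\U,\alpha}_n:\U\in\mathsf U,\alpha<\w_1\}\subset[E_n]^{<\w}$ sitting inside the countable sets $E_n$, obtaining $F_n\in[E_n]^{<\w}$ with $\{n:F^{\U,\alpha}_n\subset F_n\}\in\F$ for each pair $(\U,\alpha)$. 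To verify density of $\bigcup_n F_n$, fix a basic non-empty open $U\times V\subset X\times Y$; $(\dagger)$ produces $\U\in\mathsf U$ witnessing $U$, and the conclusion of Lemma~\ref{good_bound} produces $\alpha_V<\w_1$ with $V\cap G^{\U,\alpha_V}_n\neq\emptyset$ for all $n$. The $(\dagger)$-clause gives some $n_0$ with $R_{\alpha_V}(\U)(n_0)\subset U$, whence $F^{\U,\alpha_V}_{n_0}\subset U\times Y$ and therefore $F^{\U,\alpha_V}_{n_0}\cap(U\times V)\neq\emptyset$.

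The hard part will be the matching between this single $n_0$ and the $\F$-large set $\{n:F^{\U,\alpha_V}_n\subset F_n\}$, which a priori need not meet. The key observation is that $R_{\alpha_V}(\U)(n)\subset W^\U_n$ with $W^\U_n$ decreasing in $n$, so whenever $W^\U_{n_0}\subset U$ one automatically has $R_{\alpha_V}(\U)(n)\subset U$ for \emph{all} $n\geq n_0$, i.e., cofinitely many $n$, and cofinite sets intersect every member of $\F$. The technical core of the proof therefore reduces to arranging that the $\U\in\mathsf U$ produced by $(\dagger)$ for $U$ always contains a finite subfamily whose intersection lies in $U$; equivalently, that $U$ belongs to the filter generated by $\U$. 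This is precisely the content of $(\dagger)$ that goes beyond plain $M$-separability of $X$, and is where the careful construction of $\mathsf U$ from ground-model centered families in the proof of Lemma~\ref{laver} becomes indispensable.
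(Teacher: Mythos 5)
Your setup matches the paper's proof of this lemma almost step for step: reduce to a decreasing sequence of dense sets, slice each $E_n$ over the decreasing intersections $W^\U_n$ of a centered family, apply Lemma~\ref{good_bound} on the $Y$-side, read off finite sets in $X$ to build strategies for $(\dagger)$, and glue everything together with the $\leq_\F$-bounding fact. But the step you yourself flag as ``the hard part'' is a genuine gap, and your proposed resolution does not work. Property $(\dagger)$ hands you, for the open set $U$ and each strategy $R_{\alpha}$, only a \emph{single} index $n_0$ with $R_{\alpha}(\U)(n_0)\subset U$, while the bounding fact gives an $\F$-large set of indices $n$ with $F^{\U,\alpha}_n\subset F_n$; a single index and an $\F$-large set need not meet. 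Your fix --- that $R_{\alpha}(\U)(n)\subset W^\U_{n}\subset W^\U_{n_0}\subset U$ for all $n\ge n_0$ --- presupposes $W^\U_{n_0}\subset U$, i.e., that $U$ belongs to the filter generated by the witnessing $\U$. Nothing in the definition of $(\dagger)$ provides this: it only guarantees $R(\U)(n)\subset O$ for \emph{some} $n$, not that $O$ contains a finite intersection of members of $\U$. Nor can you import extra features of the particular $\mathsf U$ constructed inside the proof of Lemma~\ref{laver}, since the present lemma takes $(\dagger)$ as an abstract hypothesis (and even in that construction the witnessing $\U$ for a given $O$ need not generate a filter containing $O$).

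The paper closes exactly this gap by making the strategies ultrafilter-aware: instead of one strategy per $\alpha$ it uses a doubly indexed family $R_{\alpha,\beta}(\U)(n)=K^{\alpha,\U}_{\min(F_\beta\setminus n)}$ for all pairs $\alpha,\beta\in\w_1$, where $\{F_\beta:\beta\in\w_1\}$ is a base for the ultrafilter $\F$. The $(\dagger)$-conclusion, now quantified over all $\beta$, says that the set $S=\{n:K^{\alpha,\U}_n\subset O\}$ meets every $F_\beta$; since the $F_\beta$ generate an ultrafilter, $S\in\F$, so $S$ contains some $F_\xi$. Then $F_\xi\cap F_\beta$ is nonempty, and any $n$ in this intersection simultaneously satisfies $K^{\alpha,\U}_n\subset O$ and $M_n\supset(K^{\alpha,\U}_n\times L^{\alpha,\U}_n)\cap D_n$, which finishes the density check. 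You would need to incorporate this (or an equivalent) re-indexing of your $R_\alpha$ through the $F_\beta$'s for your argument to go through.
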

\begin{proof}
Let $\la D_n:n\in\w\ra$ be a sequence of dense subsets of $X\times Y$. By \cite[Lemma~2.1]{GruSak11},
 there is no loss of generality in
assuming that $D_{n+1}\subset D_n$ for all $n$. Given an open non-empty subset $U$ of
$X$, for every $n\in \omega$ set $D^{U}_n=\{y\in Y:\exists x\in U (\la x,y\ra\in D_n)\}$
and note that $D^U_n$ is dense in $Y$. Given a countable centered family $\U$ of open subsets of $X$,
fix a decreasing sequence $\la U_{\U,n}:n\in\w\ra$ of open subsets of $X$ such that
for every $U\in\U$,  there exists $n\in\w$ such that $U_{\U,n}\subset U$.
By Lemma~\ref{good_bound} there exists a sequence
$$ \big\la \la L^{\alpha,\U}_n:n\in\w \ra \: :\: \alpha\in\w_1\ra $$
such that $L^{\alpha,\U}_n\in [D^{U_{\U,n}}_n]^{<\w}$ for all $n,\alpha$, and for every open non-empty
$V\subset Y$, there exists $\alpha$ such that $L^{\alpha,\U}_n\cap V\neq\emptyset$ for all $n$.
Let us find $K^{\alpha,\U}_n\in [U_{\U,n}]^{<\w}$ such that
for every $y\in L^{\alpha,\U}_n$, there exists $x\in K^{\alpha,\U}_n$
such that $\la x,y\ra\in D_n$. For every $\alpha,\beta\in\w_1$ and $n\in\w$, set
$R_{\alpha,\beta}(\U)(n)= K^{\alpha,\U}_{\min( F_\beta\setminus n)}$.
Note that $\mathsf R=\{R_{\alpha,\beta}:\alpha,\beta\in\w_1\}$ is
such as in the definition of $(\dagger)$ because
$K^{\alpha,\U}_n\subset U$ for all $U\in\U$ and all but finitely
many $n\in\w$. It follows that there exists a family
$\mathsf U$ of countable centered families $\U$ of open subsets of
$X$ of size $|\mathsf U|=\w_1$, and such that for every open
non-empty $O\subset X$, there exists $\U\in\mathsf U$ such that for
all $\alpha,\beta\in\w_1$, there exists
$n\in F_\beta$ with the property $K^{\alpha, \U}_n\subset O$.
Since $\F$ is an ultrafilter, it follows that for all
$\alpha\in\w_1$, there exists $\xi\in\w_1$
with the property $K^{\alpha, \U}_n\subset O$ for all $n\in F_\xi$.

Since $|\U|=\w_1$, there exists a sequence $\la M_n:n\in\w\ra$ such
that $M_n\in [D_n]^{<\w}$ and for every $\U\in\mathsf U$ and
$\alpha,\beta\in\w_1$, we have
$$\big\{n\in\w : M_n\supset (K^{\alpha,\U}_n\times L^{\alpha,\U}_n)\cap D_n\big\}\in\F. $$
We claim that $\bigcup_{n\in\w}M_n$ is dense in $X\times Y$. Indeed,
let us fix an open non-empty subset of $X\times Y$ of the form
$O\times V$ and find $\U\in\mathsf U$ as above. Let $\alpha$ be such
that $L^{\alpha,\U}_n\cap V\neq\emptyset$ for all $n\in\w$. Pick
$\beta\in\w_1$ such that
$$F_\beta\subset \big\{n\in\w : M_n\supset (K^{\alpha,\U}_n\times L^{\alpha,\U}_n)\cap D_n\big\}. $$
Let $\xi\in\w_1$ be such that
$K^{\alpha, \U}_n\subset O$ for all $n\in F_\xi$.
Then for every $n\in F_\beta\cap F_\xi$, we have
$$
\emptyset\neq (O\times V)\cap (K^{\alpha,\U}_n\times
L^{\alpha,\U}_n)\subset (O\times V)\cap M_n,
$$
which completes the proof of Lemma~\ref{covering_g_delta}.
\end{proof}

Next lemma gives consistent examples of countable spaces $X$ such
that $\zeta(X)\leq\w_1$.

\begin{lemma}\label{loc_meng}
In the Miller model, suppose that $X=\la \w,\tau\ra$ is a
topological space and $x\in\w$ is such that $\U=\{U\in\mathcal
P(\w):x\in\mathit{Int}(U)\}$ is Menger. Then $\zeta(X,x)\leq \w_1$.
\end{lemma}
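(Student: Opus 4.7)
Given a sequence $\la A_n:n\in\w\ra$ of subsets of $\w$ with $x\in\overline{A_n}$ for every $n$, we may assume that each $A_n$ is infinite (otherwise simply set $K^\alpha_n:=A_n$ in that coordinate). Fix enumerations $A_n=\{a_{n,k}:k\in\w\}$ and define $\vartheta\colon\mathcal{U}\to\w^\w$ by $\vartheta(U)(n):=\min\{k:a_{n,k}\in U\}$. This map is well-defined, since $x\in\mathit{Int}(U)$ combined with $x\in\overline{A_n}$ forces $A_n\cap U\neq\emptyset$. It is continuous, because $\{U\in\mathcal{U}:\vartheta(U)(n)=k\}$ is determined by the memberships of $a_{n,0},\dots,a_{n,k}$ in $U$ and is therefore clopen in $\mathcal{P}(\w)$. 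Consequently, $F:=\vartheta(\mathcal{U})\sbst\w^\w$ is Menger.

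The plan is then to produce an $\w_1$-sized dominating family $\{b^\alpha:\alpha<\w_1\}\sbst\w^\w$ for $F$, i.e., such that every $f\in F$ satisfies $f\leq^* b^\alpha$ for some $\alpha$. Granting this, we set $K^\alpha_n:=\{a_{n,k}:k\leq b^\alpha(n)\}$, enlarging finitely many entries if needed. For any open neighborhood $O$ of $x$ in $X$ one has $O\in\mathcal{U}$, whence $\vartheta(O)\leq^* b^\alpha$ for some $\alpha$, and then $a_{n,\vartheta(O)(n)}\in O\cap K^\alpha_n$ for every $n$, giving $\zeta(X,x)\leq\w_1$.

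The main obstacle is, of course, the construction of this $\w_1$-dominating family from the Menger property of $\mathcal{U}$ in the Miller model, which we would pursue along the lines of Lemma~\ref{good_bound}. First, applying Menger to the open covers $\{\{U\in\mathcal{U}:\vartheta(U)(n)\leq m\}:m\in\w\}$ (separately for each $n\geq N$, then diagonalising over $N\in\w$) produces an $f\in\w^\w$ such that $U_O:=\{n:\vartheta(O)(n)\leq f(n)\}$ is infinite for every open neighborhood $O$ of $x$. Second, \cite[Theorem~10]{Laf92} together with \cite[Theorems~1,2]{BlaLaf89} applied to $\{U_O\}$ yields an interval partition $\la m_i\ra$ with the familiar dichotomy. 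The almost disjoint alternative is refuted by an even simpler MAD-family argument than in Lemma~\ref{good_bound}: given a MAD family $\A$ and $A,A'\in\A$, the intersection $O(A)\cap O(A')$ remains an open neighborhood of $x$, so $U_{O(A)\cap O(A')}$ is infinite, whereas $U_{O(A)\cap O(A')}\sbst U_{O(A)}\cap U_{O(A')}\sbst^*\bigcup_{i\in A\cap A'}[m_i,m_{i+1})$ is finite, a contradiction. What remains is the genuinely delicate step: upgrading the resulting first alternative, namely $H_O:=\bigcup\{[m_i,m_{i+1}):U_O\cap [m_i,m_{i+1})\neq\emptyset\}\in\F$ for every such $O$, into a pointwise $\w_1$-dominating family by means of the $\w_1$ generators $\{F_\alpha:\alpha<\w_1\}$ of $\F$. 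Since the $A_n$'s do not form a decreasing sequence (in contrast to the $D_n$'s of Lemma~\ref{good_bound}), one cannot directly borrow elements across coordinates, and the argument seems to require further use of the Menger property of $\mathcal{U}$ or of the specific ultrafilter combinatorics available in the Miller model.
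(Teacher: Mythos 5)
Your reduction is the same as the paper's: code each $U\in\U$ by the function $n\mapsto\min\{k:a_{n,k}\in U\}$, observe that this transfers the Menger property of $\U$ to a subset of $\w^\w$ (your direct continuity argument for $\vartheta$ is fine and even a bit lighter than the paper's compact-valued upper semicontinuous map $\Phi$ combined with \cite[Lemma~1]{Zdo05}), and then try to dominate the image by $\w_1$ many functions. But the proof as submitted has a genuine gap, and you flag it yourself: you never actually produce the $\w_1$-sized dominating family. The paper gets it for free by citing \cite[Lemma~2.3]{Zdo??}, which says precisely that in the Miller model every Menger subspace $Z\subset\w^\w$ admits $Y\in[\w^\w]^{\w_1}$ such that every $z\in Z$ is dominated (everywhere, not just mod finite) by some $y\in Y$. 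Your attempt to rederive this via the Laflamme dichotomy, in the style of Lemma~\ref{good_bound}, breaks down exactly where you say it does: the first alternative only tells you that each $H_O$ belongs to $\F$, and the trick of Lemma~\ref{good_bound} for converting this into finite selectors relies on borrowing witnesses from $D_l$ with $l\geq n$ (hence on the $D_n$'s being decreasing), which has no analogue for an arbitrary sequence $\la A_n:n\in\w\ra$. Without \cite[Lemma~2.3]{Zdo??} or an equivalent substitute, the argument is incomplete.

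Two smaller remarks. First, your final step uses $\leq^*$-domination and then ``enlarging finitely many entries if needed''; as stated this does not work, because the required finite enlargement of $K^\alpha_n$ depends on the particular $O$ being caught, while the sets $K^\alpha_n$ must be fixed in advance. This is repairable: close the family $\{b^\alpha:\alpha<\w_1\}$ under the countably many finite modifications of each of its members, which keeps it of size $\w_1$ and upgrades $\leq^*$ to everywhere-domination (the form actually delivered by \cite[Lemma~2.3]{Zdo??}). Second, the reduction to infinite $A_n$ and the well-definedness of $\vartheta$ are handled correctly.
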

\begin{proof}
For every $n\in\w$, fix $A_n=\{a^n_k:k\in\w\}\subset\w$ such that
$x\in\bar{A}_n$. For every $U\in\U$, set
$$\phi(U)(n)=\min\{k:a^n_k\in U\} \mbox{ \ and \ } \Phi(U)=\{z\in\w^\w: \forall n\:(z(n)\leq \phi(\U)(n) )\}$$
and note that $\Phi$ is a compact-valued map from $\U$ to $\w^\w$.
We claim that it is upper semicontinuous, i.e., for every open
$W\subset\w^\w$ containing $\Phi(U)$ for some $U\in\U$, there exists
an open neighborhood $O$ of $U$ in $\U$ such that $\Phi(U')\subset
W$ for all $U'\in\U\cap O$.
For $U,W$ as above find $m\in\w$ such that
$$ \Phi(U)=\prod_{n\in\w}(\phi(U)(n)+1)\subset \prod_{n\leq m}(\phi(U)(n)+1)\times\prod_{n>m}\w\subset W.$$
Set
$O=\{U'\in\U:\forall n\leq m (\phi(U')(n)\leq \phi(U)(n))\}$ and note that
$O$ is open in $\mathcal P(\w)$ and $\Phi(U')\subset W$ for all $U'\in O\cap \U$.

Since $\U$ is Menger and $\Phi$ is compact-valued and upper semicontinuous,
$Z:=\bigcup_{U\in\U}\Phi(U)\subset\w^\w$ is Menger by \cite[Lemma~1]{Zdo05}.
Applying \cite[Lemma~2.3]{Zdo??}, we conclude that there exists $Y\in [\w^\w]^{\w_1}$
such that for every $z\in Z$ (in particular, for every $z$ of the form $\phi(U)$, where
$U\in\U$) there exists $y\in Y$ such that $z(n)\leq y(n)$ for all $n\in\w$.
It follows from the above that $K^y_n=\{a^n_k:k\leq y(n)\}$, where $y\in Y$ and
$n\in\w$, are witnessing for $\zeta(X,x)\leq\w_1$.
\end{proof}

\begin{lemma} \label{cp_over_menger}
Suppose that $X$ is a Tychonoff space such that $X^n$ is Menger for all $n\in\w$, and $0\in A\in[C_p(X)]^\w$
is such that $0$ is a limit point of $A.$ Then
$\U=\{U\in\mathcal
P(A):0\in\mathit{Int}(U)\}$ is Menger as a subspace of $\mathcal P(A)$,
where the interior is considered in the topology on $A$ inherited from $C_p(X)$.
\end{lemma}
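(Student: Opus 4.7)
The plan is to represent $\U$ as a countable union of Menger subspaces of $\mathcal P(A)$, each obtained as the image of a compact-valued upper semicontinuous multifunction defined on a Menger power $X^m$, and then conclude by applying the preservation lemma \cite[Lemma~1]{Zdo05} that was already invoked in the proof of Lemma~\ref{loc_meng}. For each $k,m\in\w$ and $\bar x=(x_1,\ldots,x_m)\in X^m$, set $V_{k,\bar x}=\{f\in A:|f(x_i)|<1/k \text{ for all }i\leq m\}$ and $C_{k,\bar x}=\{U\in\mathcal P(A):V_{k,\bar x}\subseteq U\}$. Since the sets $\{f\in C_p(X):|f(x_i)|<1/k,\ i\leq m\}$ form a neighborhood base at $0$, we obtain $\U=\bigcup_{k,m\in\w}\bigcup_{\bar x\in X^m}C_{k,\bar x}$, and each $C_{k,\bar x}$ is a closed cylinder in the Cantor cube $\mathcal P(A)=2^A$, hence compact.

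Next, I would verify that for each fixed pair $k,m$, the multifunction $\Psi_{k,m}:X^m\to\K(\mathcal P(A))$ defined by $\Psi_{k,m}(\bar x)=C_{k,\bar x}$ is upper semicontinuous. Given open $W\subseteq\mathcal P(A)$ with $C_{k,\bar x_0}\subseteq W$, by compactness cover $C_{k,\bar x_0}$ by finitely many basic clopen rectangles $B_j=\{U:F_0^{(j)}\subseteq U,\ F_1^{(j)}\cap U=\emptyset\}$, $j<N$, each contained in $W$. Let $F=\bigcup_j(F_0^{(j)}\cup F_1^{(j)})\subseteq A$. The key observation is that whether $C_{k,\bar x}\subseteq\bigcup_j B_j$ depends only on $T(\bar x):=V_{k,\bar x}\cap F$, since the image of $C_{k,\bar x}$ under the finite-coordinate restriction $\mathcal P(A)\to 2^F$ equals the upset $\{S\subseteq F:S\supseteq T(\bar x)\}$, while membership in $\bigcup_j B_j$ is a condition on this restriction. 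The needed monotonicity $T(\bar y)\supseteq T(\bar x_0)$ for $\bar y$ in a small neighborhood of $\bar x_0$ follows from the fact that for each of the finitely many $f\in T(\bar x_0)$, the strict inequality $|f(x_{0,i})|<1/k$ is preserved by continuity of $f$ at each $x_{0,i}$.

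With upper semicontinuity and compact-valuedness of $\Psi_{k,m}$ in hand, \cite[Lemma~1]{Zdo05} yields that $\bigcup_{\bar x\in X^m}C_{k,\bar x}$ is Menger, since $X^m$ is Menger by hypothesis. Because the Menger property is preserved by countable unions, $\U=\bigcup_{k,m\in\w}\bigcup_{\bar x\in X^m}C_{k,\bar x}$ is Menger as a subspace of $\mathcal P(A)$, which is the desired conclusion.

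The hard part I expect to work through carefully is the upper semicontinuity of $\Psi_{k,m}$. The set-valued map $\bar x\mapsto V_{k,\bar x}$ is not upper semicontinuous in itself, because a function $f\in A$ with $|f(x_i)|=1/k$ can slip into or out of $V_{k,\bar x}$ as $\bar x$ varies, so one cannot naively compare $C_{k,\bar x_0}$ with $C_{k,\bar y}$ in $\mathcal P(A)$. The trick, in the same spirit as the proof of Lemma~\ref{loc_meng}, is to use compactness to reduce to a finite basic refinement of the cover, which sees $C_{k,\bar x}$ only via its trace on a fixed finite $F\subseteq A$, and where only the open, strict side of the inequality defining $V_{k,\bar x}$ is relevant.
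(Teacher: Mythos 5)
Your proof is correct and follows essentially the same route as the paper: the same decomposition of $\U$ as a countable union over $k,m$ of unions of upsets $\uparrow V_{k,\bar x}$ indexed by the Menger spaces $X^m$, followed by \cite[Lemma~1]{Zdo05} and closure of the Menger property under countable unions. The only difference is that you spell out the upper semicontinuity verification (via finite traces on $2^F$ and the one-sided monotonicity coming from the strict inequalities), which the paper leaves implicit with a reference to the analogous computation in Lemma~\ref{loc_meng}.
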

\begin{proof}
By the definition of the topology of $C_p(X)$ we have that
$$\U=\bigcup_{n,m\in\w}\bigcup_{\vec{x}=\la x_0,\ldots, x_{n-1}\ra\in X^n}\uparrow U_{n,m,\vec{x}} , $$
where $U_{n,m\vec{x}}=\{a\in A: \forall i<n\:(a(x_i)<1/m)\}$
and $\uparrow B=\{B'\subset A:B\subset B'\}$ for all $B\subset A$. In the same way as in Lemma~\ref{loc_meng}, 
we can check that the map
$$X^n\ni\vec{x}\mapsto\uparrow U_{n,m\vec{x}}\subset\mathcal P(A)$$
is compact-valued and upper semicontinuous for all $n,m\in\w$, and hence by \cite[Lemma~1]{Zdo05} $\U$ is Menger
being a countable union of its Menger subspaces.
\end{proof}

Finally, we have all necessary ingredients for the
proof of Theorem~\ref{main}. It suffices to prove that in the Miller model the product of any two countable $M$-separable spaces
$X, Y$ is $M$-separable, provided that $X$ is a subspace of $C_p(Z)$ and $C_p(Z)$ is $M$-separable.
By \cite[Theorem~35]{Sch99}, we have that $Z^n$ has the Menger property for all $n\in\w$, and
hence for every $x\in X$, the family $\U=\{U\in\mathcal
P(X):x\in\mathit{Int}(U)\}$ is Menger as a subspace of $\mathcal P(X)$ by Lemma~\ref{cp_over_menger}.
Applying Lemma~\ref{loc_meng}, we conclude that
$\zeta(X)\leq \w_1$, and hence $X$ has property $(\dagger)$ by Lemma~\ref{laver}.
It remains to apply Lemma~\ref{covering_g_delta}. \hfill $\Box$

\end{document}